\newtheorem{theorem}{Theorem}[section]
\newtheorem{corollary}[theorem]{Corollary}
\newtheorem{lemma}[theorem]{Lemma}
\theoremstyle{remark}
\newtheorem{example}{Example}
\begin{document}

\title{On a nonlinear relation for computing the overpartition function}
\author{Mircea Merca\footnote{mircea.merca@profinfo.edu.ro}
 \\ 
\small Department of Mathematics, University of Craiova, 200585 Craiova, Romania\\
\small Academy of Romanian Scientists, Ilfov 3, Sector 5, Bucharest, Romania\\}

\date{}
\maketitle

\begin{abstract}
In 1939, H. S. Zuckerman provided a Hardy-Ramanujan-Rademacher-type convergent series that can be used to compute an isolated value of the overpartition function $\overline{p}(n)$. Computing $\overline{p}(n)$ by this method requires arithmetic with very high-precision approximate real numbers and it is complicated. In this paper, we provide a formula to compute the values of $\overline{p}(n)$ that requires only the values of $\overline{p}(k)$ with $k\leqslant n/2$. This formula is combined with a known linear homogeneous recurrence relation for the overpartition function $\overline{p}(n)$ to obtain a simple and fast computation of the value of $\overline{p}(n)$. This new method uses only (large) integer arithmetic and it is simpler to program.  
\\
\\
{\bf Keywords:} algorithms, partitions, overpartitions, recurrences
\\
\\
{\bf MSC 2010:} 05A17, 05A19, 11P81, 11P82 
\end{abstract}

\section{Introduction}
\label{intro}

Recall \cite{Corteel} that an overpartition of the positive integer $n$ 
is an ordinary partition of $n$ where the first occurrence of parts of each size may be overlined. 
Let $\overline{p}(n)$ denote the number of overpartitions of $n$.
For example, the overpartitions of the integer $3$ are:
$$3,\ \overline{3},\ 2+1,\ \overline{2}+1,\ 2+\overline{1},\ \overline{2}+\overline{1},\ 1+1+1\ \text{and}\ \overline{1}+1+1.$$
We see that $\overline{p}(3)=8$. 
It is well-known that the generating function of $\overline{p}(n)$ is given by
\begin{equation}\label{Eq1}
\sum_{n=0}^{\infty} \overline{p}(n) q^n = \frac{(-q;q)_{\infty}}{(q,q)_{\infty}} = \left(\sum_{n=-\infty}^\infty (-q)^{n^2} \right)^{-1},
\end{equation}
where
$$(a;q)_{\infty} = \lim_{n\to\infty} (1-a)(1-aq)(1-aq^2)\cdots(1-aq^{n-1}).$$
Because the infinite product $(a;q)_{\infty}$ diverges when $a\neq 0$ and $|q|\geqslant 1$, whenever $(a;q)_{\infty}$ appears in a formula, we shall assume that $|q|<1$.

Overpartitions were introduced by Corteel and Lovejoy in \cite{Corteel} 
and have been the subject of many recent studies including 
Andrews \cite{Andrews15}, 
Bringmann and Lovejoy \cite{Bringmann},
Chen and Zhao \cite{Chen},
Corteel and Lovejoy \cite{Corteel}, 
Corteel and Hitczenko \cite{Corteel04}, 
Corteel, Goh and Hitczenko \cite{Corteel06},
Corteel and Mallet \cite{Corteel07},
Fu and Lascoux \cite{Fu},
Hirschhorn and Sellers \cite{Hirschhorn,Hirschhorn06}, 
Kim \cite{Kim}, 
Lovejoy \cite{Lovejoy03,Lovejoy04,Lovejoy04a,Lovejoy05,Lovejoy05a,Lovejoy07}, 
Mahlburg \cite{Mahlburg},
Merca \cite{Merca17a}
and Sills \cite{Sills}.

The following linear homogeneous recurrence relation \cite[Corollary 4]{Fortin}
\begin{equation}\label{Eq:0}
\overline{p}(n)+2 \sum_{j=1}^{\left\lfloor\sqrt{n} \right\rfloor} (-1)^{j} \overline{p}(n-j^2)=0,
\end{equation}
with $\overline{p}(0)=1$ 
provides a simple and reasonably efficient way to compute the value of $\overline{p}(n)$.
In fact, computing the value of $\overline{p}(n)$ with this recurrence relation requires all the values of $\overline{p}(k)$ with $k<n$.

There is a better way to compute an isolated value of $\overline{p}(n)$.
More than $80$ years before the coining of the term \textit{overpartition}, Hardy and Ramanujan \cite[p. 109--110]{Hardy} went on to state that
\begin{equation*}\label{Eq:1}
\overline{p}(n) = \frac{1}{4\pi} \frac{d}{dt} \left( \frac{e^{\pi\sqrt{n}}}{\sqrt{n}} \right)
+\frac{\sqrt{3}}{2\pi} \cos\left(\frac{2n\pi}{3}-\frac{\pi}{6} \right) \frac{d}{dn} \left( e^{\pi\sqrt{n}/3} \right)
+\cdots+O(n^{-1/4}).
\end{equation*}
This result was improved by Zuckerman \cite{Zuckerman} to the following 
Hardy-Ramanujan-Rademacher-type convergent series:
\begin{equation*}
\overline{p}(n) = \frac{1}{2\pi} \sum_{\substack{k=1 \\ 2\nmid k}}^{\infty} \sqrt{k} 
\sum_{\substack{0\leqslant h<k \\ \gcd(h,k)=1}} \frac{\omega(h,k)^2}{\omega(2h,k)} e^{-2\pi i n h/k}
\frac{d}{dn}\left( \frac{\sinh(\pi \sqrt{n}/k)}{\sqrt{n}}\right),
\end{equation*}
where
$$\omega(h,k) = \exp\left( \pi i \sum_{r=1}^{k-1} \frac{r}{k} \left( \frac{hr}{k} - \left\lfloor \frac{hr}{k} \right\rfloor -\frac{1}{2} \right) \right).$$ 
Computing $\overline{p}(n)$ by this formula requires arithmetic with very high-precision approximate real numbers and it is complicated. 
Details on how to efficiently implement a Hardy–Ramanujan–Rademacher type formula can be found in \cite{Johansson}.

In this paper, we present a new recurrence formula for computing the value of $\overline{p}(n)$ that requires only the values of $\overline{p}(k)$ with $k\leqslant n/2$. 
This new recurrence formula is not linear, uses only (large) integer arithmetic, it is simpler to program.

\begin{theorem}\label{T1}
	For $n\geqslant 0$,
	\begin{equation}\label{eqT1}
	\overline{p}(n)- \sum_{k=0}^{\lfloor n/2 \rfloor} \sum_{j=-\infty}^{\infty} \overline{p}(k)\overline{p}\Big(\left\lfloor n/2 \right\rfloor -k-j\big(2j+1-(-1)^n\big)\Big)=0.
	\end{equation}		
\end{theorem}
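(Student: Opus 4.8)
The plan is to recognize the double sum in \eqref{eqT1} as a coefficient extraction from a product of generating functions, to recombine the cases $n$ even and $n$ odd into a single functional equation for $P(q):=\sum_{n\geqslant 0}\overline{p}(n)q^n$, and then to reduce that equation to a classical theta identity.

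First I would record that for $n=2m$ the quantity $j\big(2j+1-(-1)^n\big)$ equals $2j^2$, while for $n=2m+1$ it equals $2j(j+1)$; in either case it is a nonnegative integer, so for fixed $n$ the sum over $j$ has finitely many nonzero terms once we set $\overline{p}(i)=0$ for $i<0$. Writing $\lfloor n/2\rfloor=m$, the double sum therefore equals the coefficient of $q^{m}$ in $P(q)^2\sum_{j}q^{e_j}$, where $e_j=2j^2$ or $e_j=2j(j+1)$. Denoting this double sum by $S(n)$, I would next form $\sum_{n\geqslant0}S(n)q^n$ by treating even and odd $n$ separately. The even part gives $\sum_{m}S(2m)q^{2m}=P(q^2)^2\sum_{j}q^{4j^2}=P(q^2)^2\varphi(q^4)$, where $\varphi(q)=\sum_{n=-\infty}^{\infty}q^{n^2}$ is Gauss' theta function; the odd part gives $\sum_{m}S(2m+1)q^{2m+1}=q\,P(q^2)^2\sum_{j}q^{4j(j+1)}=P(q^2)^2\sum_{j}q^{(2j+1)^2}$, using the substitution $4j(j+1)+1=(2j+1)^2$.

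Since $2j+1$ runs over all odd integers as $j$ runs over $\mathbb{Z}$, splitting $\varphi(q)=\sum_n q^{n^2}$ by the parity of $n$ yields $\sum_{j}q^{(2j+1)^2}=\varphi(q)-\varphi(q^4)$. Adding the two contributions, the $\varphi(q^4)$ terms cancel and we obtain
$$\sum_{n\geqslant0}S(n)q^n=P(q^2)^2\,\varphi(q).$$
Hence \eqref{eqT1} is equivalent to the single identity $P(q)=P(q^2)^2\,\varphi(q)$.

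To finish, I would use \eqref{Eq1} in the form $1/P(q)=\sum_n(-q)^{n^2}=\varphi(-q)$. Multiplying $P(q)=P(q^2)^2\varphi(q)$ by $\varphi(-q)$ and using $P(q^2)^2=1/\varphi(-q^2)^2$, the claim becomes the classical theta identity $\varphi(q)\,\varphi(-q)=\varphi(-q^2)^2$, which can also be checked directly from the Euler-product form $P(q)=(q^2;q^2)_\infty/(q;q)_\infty^2$ (it amounts to $P(q)P(-q)=P(q^2)^2$, i.e.\ both sides equal $(q^4;q^4)_\infty^2/(q^2;q^2)_\infty^4$). The only genuine work is the bookkeeping in the first two paragraphs — interpreting the double sum as a product coefficient and correctly recombining its even and odd halves via $4j(j+1)+1=(2j+1)^2$ — so I expect that recombination to be the main point; once it is in place, \eqref{eqT1} collapses to a well-known identity.
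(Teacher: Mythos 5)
Your proposal is correct. Let me note where it diverges from the paper. The paper reaches \eqref{eqT1} through the auxiliary function $\overline{p_o}(n)$ (overpartitions into odd parts): Lemma~\ref{L1} gives the convolution $\overline{p}(n)=\sum_k\overline{p}(k)\overline{p_o}(n-2k)$ from the factorization $\frac{(-q;q)_\infty}{(q;q)_\infty}=\frac{(-q^2;q^2)_\infty}{(q^2;q^2)_\infty}\cdot\frac{(-q;q^2)_\infty}{(q;q^2)_\infty}$, and Lemma~\ref{L2} uses the Jacobi triple product to expand $\overline{p_o}(2n)$ and $\overline{p_o}(2n+1)$ as theta-weighted sums of $\overline{p}$; substituting the latter into the former yields the theorem, with the two parities handled separately throughout. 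You go the other way: you recombine the even and odd cases into the single functional equation $P(q)=P(q^2)^2\varphi(q)$ via $4j(j+1)+1=(2j+1)^2$ and $\varphi(q)=\varphi(q^4)+\sum_j q^{(2j+1)^2}$, and then dispose of that equation as $P(q)P(-q)=P(q^2)^2$, a one-line Euler-product computation (both sides equal $(q^4;q^4)_\infty^2/(q^2;q^2)_\infty^4$). In substance the two arguments rest on the same theta identity --- the paper's Jacobi-triple-product step $(-q;q^2)_\infty^2(q^2;q^2)_\infty=\varphi(q)$ is exactly what makes $\varphi(q)\varphi(-q)=\varphi(-q^2)^2$ work --- but your organization bypasses $\overline{p_o}$ and the triple product entirely, which is leaner; the paper's longer route has the side benefit of producing Lemmas~\ref{L1} and~\ref{L2} as combinatorial statements of independent interest and of making the four explicit cases of Corollary~\ref{C1} immediate. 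One small bookkeeping point you handle correctly but should keep explicit in a write-up: identifying the double sum with a coefficient of $P(q)^2\sum_j q^{e_j}$ requires the convention $\overline{p}(i)=0$ for $i<0$, since the $k$-sum in \eqref{eqT1} runs over all $0\leqslant k\leqslant\lfloor n/2\rfloor$ regardless of the sign of the second argument.
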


This identity can be written in a more explicit form in the following way considering that $[x]=\lfloor x+1/2 \rfloor$.

\begin{corollary}\label{C1}
	For $n\geqslant 0$,
	\begin{enumerate}
		\item[(i)] $\displaystyle{\overline{p}(4n) = 4 \sum_{k=0}^{n-2} \sum_{j=1}^{\left\lceil\sqrt{n-k}\right\rceil-1 } \overline{p}(k)\overline{p}(2n -k-2j^2) + 2\sum_{k=0}^{n-1}\overline{p}(k)\overline{p}(2n-k) }$ 
		\item[] $\displaystyle{ \qquad\qquad +2\sum_{j=1}^{\lfloor\sqrt{n}\rfloor} \overline{p}^2(n-j^2) + \overline{p}^2(n) }$;
		\item[(ii)] $\displaystyle{ \overline{p}(4n+1) = 4 \sum_{k=0}^{n-1} \sum_{j=1}^{\left[\sqrt{n-k} \right] } \overline{p}(k)\overline{p}\big(2n -k-2j(j-1)\big) + 2\sum_{j=1}^{\left[\sqrt{n+1} \right]}\overline{p}^2\big(n-j(j-1)\big) }$;
		\item[(iii)] $\displaystyle{ \overline{p}(4n+2) = 4 \sum_{k=0}^{n-1} \sum_{j=1}^{\left\lfloor\sqrt{n-k} \right\rfloor } \overline{p}(k)\overline{p}(2n+1 -k-2j^2) + 2\sum_{k=0}^{n}\overline{p}(k)\overline{p}(2n+1-k) }$;
		\item[(iv)] $\displaystyle{ \overline{p}(4n+3) = 4 \sum_{k=0}^n \sum_{j=1}^{\left[\sqrt{n+1-k} \right] } \overline{p}(k)\overline{p}\big(2n+1 -k-2j(j-1)\big) }$.			
	\end{enumerate}	
\end{corollary}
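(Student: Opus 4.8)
The plan is to derive (i)--(iv) from Theorem~\ref{T1} by taking its parameter to be $4n$, $4n+1$, $4n+2$, $4n+3$ in turn and then simplifying the two nested sums, using $\overline{p}(m)=0$ for $m<0$ together with two sign symmetries. First I would note that $2j+1-(-1)^{m}$ equals $2j$ for even $m$ and $2j+2$ for odd $m$, so the inner argument in \eqref{eqT1} is $\lfloor m/2\rfloor-k-2j^{2}$ when $m$ is even and $\lfloor m/2\rfloor-k-2j(j+1)$ when $m$ is odd. For even $m$ the map $j\mapsto -j$ leaves $2j^{2}$ invariant with the single fixed index $j=0$, so the bilateral $j$-sum becomes $\overline{p}(\lfloor m/2\rfloor-k)+2\sum_{j\geqslant 1}\overline{p}(\lfloor m/2\rfloor-k-2j^{2})$. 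For odd $m$ the map $j\mapsto -1-j$ leaves $2j(j+1)$ invariant, has no fixed point on $\mathbb{Z}$, and matches each integer with a unique nonnegative one, so the bilateral $j$-sum equals $2\sum_{j\geqslant 0}\overline{p}(\lfloor m/2\rfloor-k-2j(j+1))=2\sum_{j\geqslant 1}\overline{p}(\lfloor m/2\rfloor-k-2j(j-1))$ after the shift $j\mapsto j-1$. Since $\overline{p}$ vanishes on the negative integers every inner $j$-sum is finite, and $\lfloor m/2\rfloor$ equals $2n$ for $m\in\{4n,4n+1\}$ and $2n+1$ for $m\in\{4n+2,4n+3\}$.

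Next I would apply an involution to each convolution that has appeared. For a fixed admissible $j$ put $C=\lfloor m/2\rfloor-2j^{2}$ for even $m$ and $C=\lfloor m/2\rfloor-2j(j-1)$ for odd $m$; then $k\mapsto C-k$ maps the range of $k$ onto itself and preserves every summand $\overline{p}(k)\overline{p}(C-k)$, and $k\mapsto\lfloor m/2\rfloor-k$ does the same for the extra convolution $\sum_{k}\overline{p}(k)\overline{p}(\lfloor m/2\rfloor-k)$ which occurs, with coefficient $1$, only for even $m$. Such an involution has a fixed point iff the relevant constant is even; since $2j^{2}$ and $2j(j-1)$ are even, the $C$-involution has a fixed point exactly when $\lfloor m/2\rfloor$ is even, that is for $m\in\{4n,4n+1\}$, and then $k=C/2$ contributes the square $\overline{p}^{2}(C/2)$, equal to $\overline{p}^{2}(n-j^{2})$ for $m=4n$ and to $\overline{p}^{2}(n-j(j-1))$ for $m=4n+1$; for the extra convolution the constant $2n$ is even (so $m=4n$ has fixed point $k=n$ and square $\overline{p}^{2}(n)$) while $2n+1$ is odd (so $m=4n+2$ has no fixed point). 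Folding a convolution across its involution doubles its coefficient on the lower half of the index set and keeps it at the fixed point: the coefficient-$2$ double sums become coefficient $4$ on the off-diagonal convolutions and $2$ on the diagonal squares, while the coefficient-$1$ convolutions give $2\sum_{k=0}^{n-1}\overline{p}(k)\overline{p}(2n-k)+\overline{p}^{2}(n)$ in (i) and $2\sum_{k=0}^{n}\overline{p}(k)\overline{p}(2n+1-k)$ in (iii). This reproduces the shape of (i)--(iv); only the summation limits remain to be pinned down.

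For the limits I would record two elementary facts: the largest integer $j$ with $j^{2}\leqslant m-1$ is $\lceil\sqrt{m}\,\rceil-1$ (check the perfect-square and non-perfect-square cases), and for $j\geqslant 1$ one has $j(j-1)\leqslant m\iff(j-1/2)^{2}\leqslant m+1$ (since $j(j-1)\in\mathbb{Z}$ and $m+3/4<m+1$), so the largest such $j$ is $\lfloor\sqrt{m+1}+1/2\rfloor=[\sqrt{m+1}\,]$. After swapping the order of summation so that $k$ is the outer index, the condition that $k$ lie in the lower half of the $C$-involution (strictly below the fixed point when there is one) reads $j^{2}\leqslant n-k-1$ for $m=4n$, $j(j-1)\leqslant n-k-1$ for $m=4n+1$, $j^{2}\leqslant n-k$ for $m=4n+2$, and $j(j-1)\leqslant n-k$ for $m=4n+3$, while the diagonal $j$-sums carry $j^{2}\leqslant n$ and $j(j-1)\leqslant n$; the two facts above turn these into the bounds $\lceil\sqrt{n-k}\,\rceil-1$, $[\sqrt{n-k}\,]$, $\lfloor\sqrt{n-k}\,\rfloor$, $[\sqrt{n+1-k}\,]$, $\lfloor\sqrt{n}\,\rfloor$ and $[\sqrt{n+1}\,]$ in the statement, and one finally checks that the displayed outer limit on $k$ is the largest value for which the inner sum is nonempty. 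I expect the only genuinely delicate part to be this last stage of bookkeeping — keeping track of which index yields a fixed point in which residue class and matching the floor/ceiling quantities that arise naturally to the $\lceil\cdot\rceil$- and $[\cdot]$-forms in (i)--(iv); everything preceding it is a mechanical unwinding of Theorem~\ref{T1}.
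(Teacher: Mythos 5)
Your proposal is correct and is exactly the intended derivation: the paper gives no written proof of Corollary \ref{C1} beyond the remark that it is Theorem \ref{T1} made explicit via $[x]=\lfloor x+1/2\rfloor$, and your specialization to $m=4n+r$, the folding of the bilateral $j$-sum by $j\mapsto -j$ (even $m$) and $j\mapsto -1-j$ (odd $m$), and the folding of each convolution by $k\mapsto C-k$ with the parity-of-$C$ fixed-point analysis fill in precisely the omitted computation. The summation limits you derive ($\lceil\sqrt{n-k}\rceil-1$, $[\sqrt{n-k}]$, $\lfloor\sqrt{n-k}\rfloor$, $[\sqrt{n+1-k}]$) all check out against the statement.
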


The expansion of $\overline{p}(n)$ by the linear recurrence relation \eqref{Eq:0} requires exactly $\lfloor \sqrt{n} \rfloor$ distinct terms. By Corollary \ref{C1}, we deduce that the expansion of $\overline{p}(4n)$ or $\overline{p}(4n+2)$ by Theorem \ref{T1} requires exactly
$$n+1+\sum_{k=1}^n \left\lfloor \sqrt{k} \right\rfloor$$
distinct terms, while the expansion of $\overline{p}(4n+1)$ or $\overline{p}(4n+3)$ requires exactly
$$\sum_{k=1}^{n+1} \left[ \sqrt{k} \right]$$
distinct terms. Even though the following inequalities
$$\left\lfloor \sqrt{4n+3} \right\rfloor \leqslant \sum_{k=1}^{n+1} \left[ \sqrt{k} \right] < n+1+\sum_{k=1}^n \left\lfloor \sqrt{k} \right\rfloor$$
holds for any positive integer $n$, we will prove that the formula given by Theorem \ref{T1} is more efficient than the formula given by \ref{Eq:0} for $n>8$.

Computing the value of $\overline{p}(n)$ by formula \eqref{Eq:0} requires the values of $\overline{p}(k)$ for all values of $k$ less than $n$. In this case, we use exactly
$$M_1(n) = \sum_{k=1}^n \left\lfloor \sqrt{k} \right\rfloor$$
of the values of $\overline{p}(k)$, $k<n$, to compute $\overline{p}(n)$.  
Let $M_2(n)$ be the number of values of $\overline{p}(k)$, $k\leqslant n/2$, invoked by Theorem \ref{T1} to compute $\overline{p}(n)$.

\begin{example}
	For $n=11$, we can write:
	\begin{align*}
	&  \overline{p}_0 = 1,\\
	&  \overline{p}_1 = 2\overline{p}_0=2,\\
	&  \overline{p}_2 = 2\overline{p}_1=4,\\
	&  \overline{p}_3 = 2\overline{p}_2=8,\\
	&  \overline{p}_4 = 2(\overline{p}_3 - \overline{p}_0)=14,\\
	&  \overline{p}_5 = 2(\overline{p}_4-\overline{p}_1)=24,\\
	& \overline{p}_{11} = 4\big(\overline{p}_0(\overline{p}_5+\overline{p}_1)+\overline{p}_1\overline{p}_4+\overline{p}_2\overline{p}_3\big)= 344.
	\end{align*}	
	It is clear that that $M_2(11)=14$. On the other hand, we have 
	$$M_1(11)=1+1+1+2+2+2+2+2+3+3+3=22.$$ 
\end{example}	

The following result shows that the sequence $\left\lbrace M_2(n)/M_1(n) \right\rbrace_{n>0} $ is convergent and its limit is less than $1/2$. This fact confirms that the formula given by Theorem \ref{T1} is more efficient than the formula \eqref{Eq:0}.
\allowdisplaybreaks{
	\begin{theorem}\label{T2}
		$$\lim_{n\to\infty} \frac{M_2(n)}{M_1(n)} = \frac{1}{8}+\sqrt{\frac{1}{8}}=0.47855\ldots.$$
	\end{theorem}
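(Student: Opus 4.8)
The plan is to pin down the exact leading-order behaviour of $M_1(n)$ and $M_2(n)$ and then form the quotient; everything rests on the elementary estimate
\[
\sum_{k=1}^{N}\lfloor\sqrt k\rfloor=\sum_{k=1}^{N}[\sqrt k]+O(N)=\tfrac23 N^{3/2}+O(N).
\]
This follows by grouping the indices $k$ according to the value of $\lfloor\sqrt k\rfloor$: exactly $2m+1$ indices satisfy $\lfloor\sqrt k\rfloor=m$, so $\sum_{k=1}^{M^{2}-1}\lfloor\sqrt k\rfloor=\sum_{m=1}^{M-1}m(2m+1)=\tfrac23 M^{3}+O(M^{2})$, and adding the partial final block $k\in[M^{2},N]$ gives the stated estimate; the version with $[\sqrt k]$ differs by at most $N$ because $\bigl|[\sqrt k]-\lfloor\sqrt k\rfloor\bigr|\le 1$. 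In particular $M_1(n)=\tfrac23 n^{3/2}+O(n)$ and $M_1(\lfloor n/2\rfloor)=\tfrac23(n/2)^{3/2}+O(n)$, so that $M_1(\lfloor n/2\rfloor)/M_1(n)\to(1/2)^{3/2}=\sqrt{1/8}$.

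Next I would unwind the definition of $M_2(n)$. Computing $\overline{p}(n)$ through Theorem \ref{T1} proceeds in two stages: first one produces, by the linear recurrence \eqref{Eq:0}, every value $\overline{p}(k)$ with $0\le k\le\lfloor n/2\rfloor$ — each of which genuinely occurs on the right-hand side of \eqref{eqT1}, as the explicit forms in Corollary \ref{C1} make plain, the largest argument appearing there being exactly $\lfloor n/2\rfloor$ — at a total cost of $M_1(\lfloor n/2\rfloor)$ invocations; then one performs the single nonlinear step \eqref{eqT1}, whose cost I will denote $R(n)$. Hence $M_2(n)=M_1(\lfloor n/2\rfloor)+R(n)$, and what remains is to estimate $R(n)$. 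Reading Corollary \ref{C1} case by case according to the residue $n\bmod 4$ and writing $m=\lfloor n/4\rfloor$, one finds that $R(n)$ is, in every case, a double sum of $1$'s over $0\le k\le m$ and $1\le j\le\lfloor\sqrt{m-k}\rfloor$ (or the corresponding range with $\lceil\sqrt{m-k}\rceil-1$ or with $[\sqrt{m-k}]$), plus $O(n)$ coming from the remaining single-index pieces. Using
\[
\sum_{k=0}^{m}\ \sum_{j=1}^{\lfloor\sqrt{m-k}\rfloor}1=\sum_{i=1}^{m}\lfloor\sqrt i\rfloor\qquad\text{(and the analogous identities for $\lceil\cdot\rceil-1$ and for $[\,\cdot\,]$)},
\]
every one of the four cases collapses to $R(n)=\sum_{i=1}^{m}\lfloor\sqrt i\rfloor+O(n)=\tfrac23(n/4)^{3/2}+O(n)$, and therefore $R(n)/M_1(n)\to(1/4)^{3/2}=1/8$.

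Finally I would combine the two estimates: for each fixed residue of $n$ modulo $4$,
\[
\frac{M_2(n)}{M_1(n)}=\frac{\tfrac23(n/2)^{3/2}+\tfrac23(n/4)^{3/2}+O(n)}{\tfrac23 n^{3/2}+O(n)}\ \longrightarrow\ \left(\tfrac12\right)^{3/2}+\left(\tfrac14\right)^{3/2}=\frac{1}{2\sqrt2}+\frac18=\frac18+\sqrt{\tfrac18},
\]
and since the four subsequential limits coincide, $\lim_{n\to\infty}M_2(n)/M_1(n)$ exists and equals $\tfrac18+\sqrt{\tfrac18}=0.47855\ldots$. The step requiring genuine care is the middle one: assembling the four branches of Corollary \ref{C1} into one clean expression for $R(n)$ and controlling the accumulated floor, ceiling, and nearest-integer roundings tightly enough to be sure the remainder is truly $O(n)$ and that the leading constant is the same for each residue class. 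Once the decomposition $M_2(n)=M_1(\lfloor n/2\rfloor)+R(n)$ with $R(n)\sim\tfrac23(n/4)^{3/2}$ is established, the limit falls out of the short computation above.
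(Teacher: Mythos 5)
Your proof is correct and follows essentially the same route as the paper: the same decomposition $M_2(n)=M_1(\lfloor n/2\rfloor)+R(n)$ with $M_1(n)\sim\tfrac23 n^{3/2}$ and $R(n)\sim\tfrac23 (n/4)^{3/2}$ (your accounting of the nonlinear step, one unit per inner summand plus $O(n)$ for the outer factors, matches the paper's convention and its worked example $M_2(11)=14$), yielding the limit $(1/2)^{3/2}+(1/4)^{3/2}=\sqrt{1/8}+1/8$. The only cosmetic differences are that you prove $\sum_{k=1}^{N}\lfloor\sqrt k\rfloor=\tfrac23N^{3/2}+O(N)$ by an elementary grouping argument instead of citing the sharp double inequality for $\sum_{k=1}^{N}\sqrt k$ used in the paper, and you treat the four residue classes modulo $4$ uniformly rather than writing out one case and asserting the rest are similar.
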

	
	We illustrate this theorem in the following four tables.

	\begin{table}[H]
		\caption{Values for $M_1(n)$ and $M_2(n)$ with $n\equiv 1\pmod 4$}
		\label{tab1}
		\centering
		\small\addtolength{\tabcolsep}{-3.2pt}
		\begin{tabular}{ccccccccccc}
			\multicolumn{11}{l}{Values for $M_1(n)$ and $M_2(n)$ with $n\equiv 1\pmod 4$}\\
			\hline \\[-2ex]
			$n$      & 1 & 5 &  9 & 13 & 17 & 21 & 25 & 101 & 1001 & 10001\\ [0.5ex]
			\hline \\[-2ex]
			$M_1(n)$ & 1 & 7 & 16 & 28 & 42 & 58 & 75 & 635 & 20646 & 661850 \\[0.5ex]
			$M_2(n)$ & 2 & 6 & 13 & 20 & 27 & 36 & 47 & 337 & 10149 & 319225 \\[0.5ex]
			$M_2/M_1$ & 2.000 & 0.857 & 0.812 & 0.714 & 0.642 & 0.620 & 0.626 & 0.530 & 0.491 & 0.482 \\[0.5ex]
			\hline \\
				\end{tabular}
			\end{table}
			\begin{table}[h]
				\caption{Values for $M_1(n)$ and $M_2(n)$ with $n\equiv 2\pmod 4$}
				\label{tab2}
				\centering
				\small\addtolength{\tabcolsep}{-3.2pt}
				\begin{tabular}{ccccccccccc}
			\multicolumn{11}{l}{Values for $M_1(n)$ and $M_2(n)$ with $n\equiv 2\pmod 4$}\\
			\hline \\[-2ex]
			$n$      & 2 & 6 & 10 & 14 & 18 & 22 & 26 & 102 & 1002 & 10002\\ [0.5ex]
			\hline \\[-2ex]
			$M_1(n)$ & 2 & 9 & 19 & 31 & 46 & 62 & 80 & 645 & 20677& 661950 \\[0.5ex]
			$M_2(n)$ & 3 & 9 & 17 & 25 & 35 & 46 & 57 & 376 & 10526& 322972\\[0.5ex]
			$M_2/M_1$ & 1.500 & 1.000 & 0.894 & 0.806 & 0.760 & 0.741 & 0.712 & 0.582 & 0.509 & 0.487\\[1ex]
			\hline \\
				\end{tabular}
			\end{table}	
			\begin{table}[h]
				\caption{Values for $M_1(n)$ and $M_2(n)$ with $n\equiv 3\pmod 4$}
				\label{tab3}
				\centering
				\small\addtolength{\tabcolsep}{-3.2pt}
				\begin{tabular}{ccccccccccc}
			\multicolumn{11}{l}{Values for $M_1(n)$ and $M_2(n)$ with $n\equiv 3\pmod 4$}\\
			\hline \\[-2ex]
			$n$      & 3 & 7 & 11 & 15 & 19 & 23 & 27 & 103 & 1003 & 10003 \\ [0.5ex]
			\hline \\[-2ex]
			$M_1(n)$ & 3 & 11 & 22 & 34 & 50 & 66 & 85 & 655 & 20708 & 662050 \\[0.5ex]
			$M_2(n)$ & 3 &  7 & 14 & 21 & 29 & 38 & 48 & 340 & 10156 & 319246\\[0.5ex]
			$M_2/M_1$ & 1.000 & 0.636 & 0.636 & 0.617 & 0.580 & 0.575 & 0.564 & 0.519 & 0.490 & 0.482 \\[1ex]
			\hline \\
				\end{tabular}
			\end{table}	
			\begin{table}[h]
				\caption{Values for $M_1(n)$ and $M_2(n)$ with $n\equiv 0\pmod 4$}
				\label{tab4}
				\centering
				\small\addtolength{\tabcolsep}{-3.2pt}
				\begin{tabular}{ccccccccccc}
			\multicolumn{11}{l}{Values for $M_1(n)$ and $M_2(n)$ with $n\equiv 0\pmod 4$}\\
			\hline \\[-2ex]
			$n$      & 4 & 8 & 12 & 16 & 20 & 24 & 28 & 104 & 1004 & 10004\\ [0.5ex]
			\hline \\[-2ex]
			$M_1(n)$ & 5 & 13 & 25 & 38 & 54 & 70 & 90 & 665 & 20739 & 662150\\[0.5ex]
			$M_2(n)$ & 8 & 15 & 23 & 33 & 44 & 55 & 66 & 395 & 10580 & 323144\\[0.5ex]
			$M_2/M_1$ & 1.600 & 1.153 & 0.920 & 0.868 & 0.814 & 0.785 & 0.733 & 0.593 & 0.510 & 0.488\\[1ex]
			\hline \\
		\end{tabular}
	\end{table}	
}

\section{Proof of Theorem \ref{T1}}

We denote by $\overline{p_o}(n)$ the number of overpartitions of $n$ into odd parts. It is well known that the generating function for $\overline{p_o}(n)$ is given by
\begin{equation}\label{GFOVP2}
\sum_{n=0}^{\infty} \overline{p_o}(n) q^n = \frac{(-q;q^2)_{\infty}}{(q;q^2)_{\infty}}.
\end{equation}
The expression of this generating function firstly appeared in the following series-product identity 
$$\sum_{n=0}^{\infty} \frac{(-1;q)_n q^{n(n+1)/2} }{(q;q)_n} =  \frac{(-q;q^2)_{\infty}}{(q;q^2)_{\infty}}$$
that was published by Lebesgue \cite{Lebesgue} in $1840$. More recently, the generating function \eqref{GFOVP2} for $\overline{p_o}(n)$ appeared in the works of Bessenrodt \cite{Bessenrodt}, Santos and Sills \cite{Santos}.  
Arithmetic properties of the function $\overline{p_o}(n)$ have been investigated later by Hirschhorn and Sellers \cite{Hirschhorn06}. 

In order to prove Theorem \ref{T1}, we present some relationships between $\overline{p}(n)$ and $\overline{p_o}(n)$.

\begin{lemma}\label{L1}
	For $n\geqslant 0$,
	\begin{equation*}
	\overline{p}(n) = \sum_{k=0}^{\lfloor n/2 \rfloor} \overline{p}(k) \overline{p_o}(n-2k).
	\end{equation*}
\end{lemma}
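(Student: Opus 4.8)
The natural approach is purely generating-function manipulation. The plan is to show that the claimed identity
$$\sum_{n\geqslant 0}\overline{p}(n)q^n=\left(\sum_{n\geqslant 0}\overline{p}(n)q^{2n}\right)\left(\sum_{n\geqslant 0}\overline{p_o}(n)q^n\right)$$
holds as an identity of formal power series, since the right-hand side, after collecting the coefficient of $q^n$, is exactly $\sum_{k=0}^{\lfloor n/2\rfloor}\overline{p}(k)\,\overline{p_o}(n-2k)$. Replacing $q$ by $q^2$ in \eqref{Eq1} gives $\sum_{n\geqslant 0}\overline{p}(n)q^{2n}=(-q^2;q^2)_\infty/(q^2;q^2)_\infty$, and by \eqref{GFOVP2} we have $\sum_{n\geqslant 0}\overline{p_o}(n)q^n=(-q;q^2)_\infty/(q;q^2)_\infty$. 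So the whole lemma reduces to verifying the infinite-product identity
$$\frac{(-q;q)_\infty}{(q;q)_\infty}=\frac{(-q^2;q^2)_\infty}{(q^2;q^2)_\infty}\cdot\frac{(-q;q^2)_\infty}{(q;q^2)_\infty}.$$

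The key step is then the elementary observation that every factor splits according to the parity of the exponent: $(a;q)_\infty=(a;q^2)_\infty\,(aq;q^2)_\infty$. Applying this with $a=-q$ gives $(-q;q)_\infty=(-q;q^2)_\infty(-q^2;q^2)_\infty$, and applying it with $a=q$ gives $(q;q)_\infty=(q;q^2)_\infty(q^2;q^2)_\infty$. Dividing these two factorizations immediately yields the desired identity, and no convergence issue arises because we work formally in $q$ (equivalently, for $|q|<1$ as stipulated in the introduction). Extracting the coefficient of $q^n$ from both sides of the product form then gives the stated finite sum, with the truncation at $k=\lfloor n/2\rfloor$ coming from the fact that $\overline{p_o}$ only accepts nonnegative arguments.

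I do not anticipate any serious obstacle: the only thing to be careful about is bookkeeping the parity-splitting of the $q$-Pochhammer symbols and making sure the index range in the Cauchy product matches $0\leqslant k\leqslant\lfloor n/2\rfloor$. If one prefers a combinatorial proof instead, one can argue bijectively: an overpartition of $n$ decomposes uniquely into its even parts (which, after halving, form an arbitrary overpartition counted by $\overline{p}(k)$ where $2k$ is the sum of the even parts) and its odd parts (an overpartition into odd parts of $n-2k$), since overlining is assigned independently to the first occurrence of each part size and the even and odd part-sizes are disjoint. Either route is short; I would present the generating-function version as the main proof.
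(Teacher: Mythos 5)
Your proof is correct and follows essentially the same route as the paper: both rest on the product identity $\frac{(-q;q)_\infty}{(q;q)_\infty}=\frac{(-q^2;q^2)_\infty}{(q^2;q^2)_\infty}\cdot\frac{(-q;q^2)_\infty}{(q;q^2)_\infty}$ followed by extraction of the coefficient of $q^n$. Your explicit justification via the parity splitting $(a;q)_\infty=(a;q^2)_\infty(aq;q^2)_\infty$ is a detail the paper leaves implicit, but the argument is the same.
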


\begin{proof}
	Having
	$$ \frac{(-q;q)_\infty}{(q;q)_\infty} = \frac{(-q^2;q^2)_\infty}{(q^2;q^2)_\infty} \cdot
	\frac{(-q;q^2)_\infty}{(q;q^2)_\infty},$$
	we obtain
	$$ 
	\sum_{n=0}^{\infty} \overline{p}(n) q^n
	= \left( \sum_{n=0}^{\infty} \overline{p}(n) q^{2n} \right) 
	\left( \sum_{n=0}^{\infty} \overline{p_o}(n) q^n \right).
	$$ 
\end{proof}

\begin{lemma}\label{L2}
	For $n\geqslant 0$,
	\begin{enumerate}
		\item[(i)] $\displaystyle{ \overline{p_o}(2n) = \overline{p}(n) +2 \sum_{k=1}^{\left\lfloor \sqrt{n/2} \right\rfloor} \overline{p}(n-2k^2) };$
		\item[(ii)] $\displaystyle{ \overline{p_o}(2n+1) = 2 \sum_{k=0}^{\left\lfloor \sqrt{n/2} \right\rfloor} \overline{p}\big(n-2k(k+1)\big) }.$		
	\end{enumerate}
\end{lemma}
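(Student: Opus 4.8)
\section*{Proof proposal for Lemma~\ref{L2}}

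The plan is to establish both identities simultaneously by extracting the even and odd parts of the generating function
\[
f(q):=\sum_{n=0}^{\infty}\overline{p_o}(n)\,q^n=\frac{(-q;q^2)_\infty}{(q;q^2)_\infty}.
\]
Replacing $q$ by $-q$ interchanges the two infinite products, so $f(-q)=(q;q^2)_\infty/(-q;q^2)_\infty=1/f(q)$, and therefore
\[
\sum_{n=0}^{\infty}\overline{p_o}(2n)\,q^{2n}=\tfrac12\Bigl(f(q)+\tfrac{1}{f(q)}\Bigr),
\qquad
\sum_{n=0}^{\infty}\overline{p_o}(2n+1)\,q^{2n+1}=\tfrac12\Bigl(f(q)-\tfrac{1}{f(q)}\Bigr).
\]
Thus the whole task reduces to evaluating $f(q)\pm 1/f(q)$ in closed form.

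First I would put the two fractions over the common denominator $(q;q^2)_\infty(-q;q^2)_\infty=(q^2;q^4)_\infty$ and invoke the two classical Jacobi-triple-product specializations
\[
(-q;q^2)_\infty^2\,(q^2;q^2)_\infty=\sum_{n=-\infty}^{\infty}q^{n^2},
\qquad
(q;q^2)_\infty^2\,(q^2;q^2)_\infty=\sum_{n=-\infty}^{\infty}(-1)^n q^{n^2},
\]
together with their consequence $(q^2;q^2)_\infty(q^2;q^4)_\infty=(q^2;q^4)_\infty^2(q^4;q^4)_\infty=\sum_{n=-\infty}^{\infty}(-1)^n q^{2n^2}$ (the second identity with $q$ replaced by $q^2$). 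This gives
\[
f(q)\pm\frac{1}{f(q)}=\frac{\sum_{n=-\infty}^{\infty}q^{n^2}\ \pm\ \sum_{n=-\infty}^{\infty}(-1)^n q^{n^2}}{\sum_{n=-\infty}^{\infty}(-1)^n q^{2n^2}}.
\]

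Next I would simplify the numerators by parity of $n$: for the $+$ sign only the even $n$ survive, giving $2\sum_{n=-\infty}^{\infty}q^{4n^2}$; for the $-$ sign only the odd $n$ survive, giving $2\sum_{n\ \text{odd}}q^{n^2}=2q\sum_{m=-\infty}^{\infty}q^{4m(m+1)}=4q\sum_{m=0}^{\infty}q^{4m(m+1)}$, where the last equality uses the symmetry $m\mapsto -m-1$ of $m(m+1)$. Since $\bigl(\sum_{n=-\infty}^{\infty}(-1)^n q^{2n^2}\bigr)^{-1}=\sum_{n=0}^{\infty}\overline{p}(n)q^{2n}$ by \eqref{Eq1} (with $q$ replaced by $q^2$), I obtain
\[
\sum_{n=0}^{\infty}\overline{p_o}(2n)\,q^{2n}=\Bigl(\sum_{k=-\infty}^{\infty}q^{4k^2}\Bigr)\Bigl(\sum_{m=0}^{\infty}\overline{p}(m)q^{2m}\Bigr),
\qquad
\sum_{n=0}^{\infty}\overline{p_o}(2n+1)\,q^{2n+1}=2q\Bigl(\sum_{k=0}^{\infty}q^{4k(k+1)}\Bigr)\Bigl(\sum_{m=0}^{\infty}\overline{p}(m)q^{2m}\Bigr).
\]
Comparing the coefficient of $q^{2n}$ in the first identity yields $\overline{p_o}(2n)=\overline{p}(n)+2\sum_{k\geqslant 1}\overline{p}(n-2k^2)$, and comparing the coefficient of $q^{2n+1}$ in the second yields $\overline{p_o}(2n+1)=2\sum_{k\geqslant 0}\overline{p}(n-2k(k+1))$. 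In both sums the terms with negative argument vanish, and since $k^2\leqslant k(k+1)$ every nonzero term already occurs for $k\leqslant\lfloor\sqrt{n/2}\rfloor$; this justifies truncating at that index, which gives exactly (i) and (ii).

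The parts I expect to need care are the bookkeeping with the theta quotients — the triple-product specializations above are standard but must be applied cleanly — and, in part (ii), tracking the factor $2$ produced when $\sum_{m=-\infty}^{\infty}q^{4m(m+1)}$ is rewritten as $2\sum_{m=0}^{\infty}q^{4m(m+1)}$. Everything else is routine coefficient comparison.
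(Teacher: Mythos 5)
Your proposal is correct and follows essentially the same route as the paper: both exploit $f(-q)=1/f(q)$ for $f(q)=(-q;q^2)_\infty/(q;q^2)_\infty$, use the Jacobi triple product specializations to express $f$ and $1/f$ as theta quotients over $(-q^2;q^2)_\infty/(q^2;q^2)_\infty=\sum\overline{p}(n)q^{2n}$, and take the half-sum and half-difference to dissect $\overline{p_o}$ by parity before comparing coefficients. The only differences are cosmetic (you clear denominators via $(q;q^2)_\infty(-q;q^2)_\infty=(q^2;q^4)_\infty$ where the paper rewrites the triple product directly), and your justification of the truncation at $k\leqslant\lfloor\sqrt{n/2}\rfloor$ is sound.
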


\begin{proof}
	The Jacobi triple product identity \cite[Theorem 11]{Andrews04} can be expressed in terms of the Ramanujan theta function as follows
	$$(-q;qx)_{\infty} (-x;qx)_{\infty} (qx;qx)_{\infty} = \sum_{n=-\infty}^{\infty}q^{n(n+1)/2} x^{n(n-1)/2}, \qquad \left| qx\right|<1.$$
	Replacing $x$ by $q$ in this relation, we obtain
	\begin{eqnarray*}
		(-q;q^2)^2_\infty (q^2;q^2)_\infty = \sum_{n=-\infty}^\infty q^{n^2}
	\end{eqnarray*}
	On the other hand, we have
	\begin{eqnarray*}
		(-q;q^2)^2_\infty (q^2;q^2)_\infty 
		= \frac{(-q;q^2)_\infty (q^2;q^4)_\infty (q^2;q^2)_\infty}{(q;q^2)_\infty} 
		= \frac{(-q;q^2)_\infty }{(q;q^2)_\infty } 	
		\cdot \frac{ (q^2;q^2)_\infty}{(-q^2;q^2)_\infty} 
	\end{eqnarray*}
	Thus we deduce that
	\begin{eqnarray*}
		\frac{(-q;q^2)_\infty }{(q;q^2)_\infty } 
		= \frac {(-q^2;q^2)_\infty} { (q^2;q^2)_\infty} \sum_{n=-\infty}^\infty q^{n^2}
	\end{eqnarray*}
	and
	\begin{eqnarray*}
		\frac{(q;q^2)_\infty }{(-q;q^2)_\infty } 
		= \frac {(-q^2;q^2)_\infty} { (q^2;q^2)_\infty} \sum_{n=-\infty}^\infty (-1)^n q^{n^2}.
	\end{eqnarray*}
	Then we can write the following identities
	$$
	\frac{1}{2} \left( \frac{(-q;q^2)_\infty}{(q;q^2)_\infty} + \frac{(q;q^2)_\infty}{(-q;q^2)_\infty} \right) 
	= \frac{(-q^2;q^2)_\infty}{(q^2;q^2)_\infty} \sum_{n=-\infty}^{\infty} q^{4n^2}
	$$
	and
	$$
	\frac{1}{2} \left( \frac{(-q;q^2)_\infty}{(q;q^2)_\infty} - \frac{(q;q^2)_\infty}{(-q;q^2)_\infty} \right) 
	= \frac{(-q^2;q^2)_\infty}{(q^2;q^2)_\infty} \sum_{n=-\infty}^{\infty} q^{(2n+1)^2}.
	$$
	Considering the generating functions of  $\overline{p}(n)$ and $\overline{p_o}(n)$, we obtain the relations
	$$\sum_{n=0}^\infty  \overline{p_o}(2n) q^{2n} 
	= \left( \sum_{n=0}^\infty  \overline{p}(n) q^{2n}\right) 
	\left( \sum_{n=-\infty}^{\infty} q^{4n^2} \right)
	$$
	and
	$$\sum_{n=0}^\infty  \overline{p_o}(2n+1) q^{2n+1} 
	= \left( \sum_{n=0}^\infty  \overline{p}(n) q^{2n}\right) 
	\left( \sum_{n=-\infty}^{\infty} q^{(2n+1)^2} \right).
	$$
	that can be written as
	$$\sum_{n=0}^\infty  \overline{p_o}(2n) q^{n} 
	= \left( \sum_{n=0}^\infty  \overline{p}(n) q^{n}\right) 
	\left( \sum_{n=-\infty}^{\infty} q^{2n^2} \right)
	$$
	and
	$$\sum_{n=0}^\infty  \overline{p_o}(2n+1) q^{n} 
	= \left( \sum_{n=0}^\infty  \overline{p}(n) q^{n}\right) 
	\left( \sum_{n=-\infty}^{\infty} q^{2n^2+2n} \right).
	$$
	Equating the coefficients of $q^n$ in the last identity gives the following decomposition of $\overline{p_o}(2n)$ in terms of the overpartition function $\overline{p}(n)$:
	$$ \overline{p_o}(2n) = \sum_{k=-\infty}^{\infty} \overline{p}(n-2k^2)$$
	and
	$$ \overline{p_o}(2n+1) = \sum_{k=-\infty}^{\infty} \overline{p}\big(n-2k(k+1)\big).$$
	These conclude the proof.
\end{proof}

The proof of Theorem \ref{T1} follows easily considering Lemmas \ref{L1} and \ref{L2}.

\section{Proof of Theorem \ref{T2}}

First, we prove the case $n\equiv 0 \pmod 4$. According to the linear recurrence relation \eqref{Eq:0} and Corollary \ref{C1}.(i), we have:
\begin{equation*}
M_1(4n) = \sum_{k=1}^{4n} \left\lfloor\sqrt{k}\right\rfloor
\end{equation*}
and
\begin{align*}
M_2(4n) & = M_1(2n) + n-1+\sum_{k=0}^{n-2} \Big(\left\lceil \sqrt{n-k}\right\rceil -1\Big) +2n +2\left\lfloor \sqrt{n} \right\rfloor+2 \\
& = 2n+1 + 2\left\lfloor \sqrt{n} \right\rfloor +\sum_{k=1}^{n} \left\lceil \sqrt{k}\right\rceil+\sum_{k=1}^{2n} \left\lfloor\sqrt{k}\right\rfloor.
\end{align*}
Considering that
$$\sqrt{k}-1 < \left\lfloor \sqrt{k} \right\rfloor \leqslant \sqrt{k}\leqslant \left\lceil \sqrt{k} \right\rceil < \sqrt{k}+1,$$
we can write
\begin{equation*}
\sum_{k=1}^{4n} \sqrt{k} - 4n < M_1(4n) \leqslant \sum_{k=1}^{4n} \sqrt{k}
\end{equation*}
and
\begin{equation*}
2\sqrt{n}-1+\sum_{k=1}^{n} \sqrt{k}+\sum_{k=1}^{2n} \sqrt{k} < M_2(4n) < 3n+1 + 2\sqrt{n}+\sum_{k=1}^{n} \sqrt{k}+\sum_{k=1}^{2n} \sqrt{k}.
\end{equation*}
On the other hand, by Merca \cite[Theorem 1]{Merca}, for $n>0$ we have
\begin{equation*}
\left( \frac{2n}{3}+\frac{1}{8}-\frac{1}{8\sqrt{n+1}}\right) \sqrt{n+1} < \sum_{k=1}^{n}\sqrt{k} < \left(\frac{2n}{3}+\frac{1}{6}-\frac{1}{6\sqrt{n+1}} \right)\sqrt{n+1}.
\end{equation*}
These double inequalities allows us to deduce that
$$\lim_{n\to\infty} \frac{M_2(4n)}{M_1(4n)} = \frac{1}{8} + \sqrt{\frac{1}{8}}.$$
In a similar way, we prove that
$$\lim_{n\to\infty} \frac{M_2(4n+r)}{M_1(4n+r)} = \frac{1}{8} + \sqrt{\frac{1}{8}},$$
for each $r=1,2,3$.

\section{Concluding remarks}

A new algorithm for computing the overpartition function $\overline{p}(n)$ has been introduced in this paper.
Although our algorithm is not the fastest way to compute an isolated value of $\overline{p}(n)$, it works fine for $n$ up to a few million.  In addition, this algorithm allows the computation to be split across multiple processors more easily than an algorithm based power series inversion. It remains an open problem whether there is a fast way to compute the isolated value $\overline{p}(n)$ using purely algebraic methods.


\bigskip

\end{document}